\def\section{\@startsection{section}{1}%
  \z@{1.1\linespacing\@plus\linespacing}{.8\linespacing}%
  {\normalfont\Large\scshape\centering}}
\theoremstyle{plain}
\newtheorem*{conj*}{Root Groups Conjecture}
\newtheorem*{thm1.2}{(1.2) Theorem}
\newtheorem*{thm1.3}{(1.3) Theorem}
\newtheorem*{thm1.4}{(1.4) Theorem}
\newtheorem*{prop*}{Proposition}
\newtheorem{prop}{Proposition}[section]
\newtheorem{thm}[prop]{Theorem}
\newtheorem{cor}[prop]{Corollary}
\newtheorem{lemma}[prop]{Lemma}
\theoremstyle{definition}
\newtheorem{definition}[prop]{Definition}
\newtheorem{hypothesis}[prop]{Hypothesis}
\newtheorem*{Def*}{Definition}
\newtheorem{notation}[prop]{Notation}
\newtheorem*{notation*}{Notation}
\newtheorem{remark}[prop]{Remark}
\newcommand{\Cen}{\rm Cen}
\newcommand{\cala}{\mathcal{A}}
\newcommand{\calg}{\mathcal{G}}
\newcommand{\qq}{\mathbb{Q}}
\newcommand{\zz}{\mathbb{Z}}
\newcommand{\ga}{\alpha}
\newcommand{\gb}{\beta}
\newcommand{\gc}{\gamma}
\renewcommand{\gg}{\gamma}
\newcommand{\gd}{\delta}
\newcommand{\gre}{\epsilon}
\newcommand{\gl}{\lambda}
\newcommand{\sminus}{\setminus}
\newcommand{\lan}{\langle}
\newcommand{\ran}{\rangle}
\newcommand{\Tr}{{\rm Tr}}
\numberwithin{equation}{section}
\begin{document}
\title[Sharply $2$-transitive groups of characteristic~$0$]{\emph{Addendum to} Sharply $2$-transitive groups  of characteristic~$0$}
\author[Malte Scherff,  Katrin Tent]{Malte Scherff\qquad Katrin Tent}

\address{Malte Scherff, Katrin Tent \\
         Mathematisches Institut \\
         Universit\"at M\"unster \\
	 Einsteinstrasse 62\\
         48149 M\"unster \\
         Germany}
\email{tent@wwu.de}

\keywords{sharply $2$-transitive, free product, HNN extension, malnormal}
\subjclass[2010]{Primary: 20B22}

\begin{abstract}
In this short note we show how to modify the construction of non-split sharply $2$-transitive groups of characteristic~$0$  given in \cite{RT} to allow for arbitrary fields of characteristic $0$. 
\end{abstract}

\date{\today}
\maketitle
\section{Introduction}

The first sharply non-split sharply $2$-transitive groups in characteristic $0$ were constructed in \cite{RT}. However, the construction given there only works when starting from the group $AGL(1,\mathbb{Q})= \mathbb{Q}_+\rtimes \mathbb{Q}^*$.
We modify the construction given there in order to prove:

\begin{thm}\label{t:main}
For any field $\mathbb{K}$ of characteristic $0$ the group $AGL(1,\mathbb{K})= \mathbb{K}_+\rtimes \mathbb{K}^*$ can be embedded into a sharply $2$-transitive group of characteristic $0$ not containing any regular normal subgroup.
\end{thm}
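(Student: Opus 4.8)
The plan is to adapt the construction of \cite{RT}, which realises such a group as the union of an increasing chain built by iterated amalgamated free products and HNN extensions; I would isolate where the restriction $\mathbb{K}=\mathbb{Q}$ enters and replace the relevant step. Recall that a group $G$ with a subgroup $H$ affords a sharply $2$-transitive action, on $G/H$, precisely when $H$ is malnormal in $G$ and $G\setminus H$ is a single $(H,H)$-double coset; such a $G$ has characteristic $0$ iff the product of any two distinct involutions has infinite order, and $G$ has no regular normal subgroup iff the set of its fixed-point-free elements, together with $1$, is not a subgroup. I would start from $G_{0}:=AGL(1,\mathbb{K})$ and $H_{0}:=\mathbb{K}^{*}$, the stabiliser of $0$. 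An easy computation shows $H_{0}^{g}\cap H_{0}=1$ for every $g\in G_{0}\setminus H_{0}$, so $H_{0}$ is malnormal; the action of $G_{0}$ on $\mathbb{K}$ is the standard (split) sharply $2$-transitive one; it has characteristic $0$ because distinct involutions $x\mapsto-x+a$ differ by a nonzero translation; and $H_{0}$ has the unique involution $-1$ (a field of characteristic $0$ has exactly two square roots of $1$). Thus $(G_{0},H_{0})$ satisfies all the hypotheses on the starting pair in \cite{RT}, except that it refers to $\mathbb{K}$ in place of $\mathbb{Q}$.

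Next, fix a cardinal $\kappa\geq|\mathbb{K}|+\aleph_{0}$ and build a continuous chain $(G_{\beta})_{\beta\leq\kappa}$, each $G_{\beta}\supseteq G_{0}$ carrying a malnormal subgroup $H_{\beta}\supseteq H_{0}$, along a transfinite book-keeping that interleaves two kinds of moves, each an amalgam $G_{\beta}\ast_{H_{\beta}}L$ or an HNN extension of $G_{\beta}$ with associated subgroups inside $H_{\beta}$: \emph{merging moves}, which for a prescribed pair $g,g'\in G_{\beta}\setminus H_{\beta}$ adjoin an element forcing $HgH=Hg'H$, so that in the limit $G\setminus H$ becomes a single double coset; and a cofinal family of \emph{non-splitting moves}, which adjoin fixed-point-free elements $a,b$ with $ab$ fixing a point, so that in the limit the fixed-point-free elements do not close up to a subgroup. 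For each move one verifies: (i) $H_{\beta+1}$ is again malnormal, hence self-normalising, from the normal-form and conjugacy theorems for amalgams and HNN extensions together with malnormality of $H_{\beta}$ in $G_{\beta}$ (inductive hypothesis) and in $L$; (ii) the torsion of $G_{\beta+1}$ is contained in its $G_{\beta+1}$-conjugates of $G_{\beta}$ — automatic for HNN extensions, and true for the amalgams provided $L$ has all its torsion conjugate into $H_{\beta}$ — so that each stabiliser keeps a single involution and characteristic $0$ is preserved; (iii) $G_{0}\subseteq G_{\beta+1}$, preserving the embedding of $AGL(1,\mathbb{K})$. At the end, $G:=G_{\kappa}$ with $H:=H_{\kappa}$ is malnormal with a single nontrivial double coset, of characteristic $0$, contains $AGL(1,\mathbb{K})$, and by the non-splitting moves its fixed-point-free elements do not form a subgroup with $1$; that is, $G$ is sharply $2$-transitive of characteristic $0$ with no regular normal subgroup.

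The only place where \cite{RT} uses $\mathbb{K}=\mathbb{Q}$, and thus the main obstacle, is the construction of the partner groups $L$ (and of the associated subgroups of the HNN extensions) in the two kinds of moves: there the explicit decomposition $\mathbb{Q}^{*}\cong\{\pm1\}\times\mathbb{Z}^{(\mathbb{N})}$, in particular that $\mathbb{Q}^{*}/\{\pm1\}$ is free abelian, is used to secure invariants (i) and (ii) after amalgamation. For a general field, $\mathbb{K}^{*}$ carries a nontrivial group of roots of unity and need not be free modulo torsion, so the recipe must be replaced by an abstract one: for each prescribed pair one needs a group $L$ built from the current stabiliser $H_{\beta}$ so that $H_{\beta}$ stays malnormal (hence self-normalising) in $L$, so that the torsion of $L$ is conjugate into $H_{\beta}$, and so that the adjoined conjugator $t$ carries $H_{\beta}\cap H_{\beta}^{t}$ into a subgroup of $L$ meeting $H_{\beta}$ trivially. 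I expect such $L$ can be produced using only the malnormality of $H_{\beta}$ in $G_{\beta}$ and the uniqueness of the involution in $H_{\beta}$ — which keeps involutions from being created or identified during the amalgamation — with no appeal to the arithmetic of $\mathbb{K}^{*}$. Granting this, the enumeration of tasks, the verification of (ii) and (iii), and the passage to limits go through as in \cite{RT}, now over $\kappa$, establishing Theorem~\ref{t:main}.
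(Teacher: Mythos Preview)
Your overall architecture---start from $(G_0,H_0)=(AGL(1,\mathbb{K}),\mathbb{K}^*)$, build an increasing chain by HNN extensions, and keep malnormality, the single involution in the stabiliser, and characteristic~$0$ as inductive invariants---matches \cite{RT} and this addendum. But there is a genuine gap, and it sits exactly where you say ``I expect such $L$ can be produced'': you do not construct anything, and more importantly your diagnosis of the obstruction is off, so the hoped-for fix is aimed at the wrong place.

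The dependence on $\mathbb{K}=\mathbb{Q}$ in \cite{RT} is \emph{not} through the multiplicative structure $\mathbb{Q}^*\cong\{\pm1\}\times\mathbb{Z}^{(\mathbb{N})}$ or through any freeness of $\mathbb{K}^*/\{\pm1\}$. It enters through the \emph{additive} group, via the equivalence relation on involutions used to control centralizers of translations. In \cite{RT} two involutions $s,s'$ are declared equivalent when $js$ and $js'$ have a common power, and the key invariant~(5) asserts ${\rm Cen}(js)=\Tr(\overline{\langle j,s\rangle})$, the translations of the divisible closure. For $AGL(1,\mathbb{K})$ with $\mathbb{K}\ne\mathbb{Q}$ this simply fails: the centralizer of a nonzero translation is all of $\mathbb{K}_+$, while the divisible closure of $\langle j,s\rangle$ only sees $\mathbb{Q}\cdot(js)$. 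So the inductive package (1)--(6) of \cite{RT} does not get off the ground for general $\mathbb{K}$, and no amount of tinkering with the amalgam partners $L$ repairs this, because the failure is already in the base group.

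The paper's fix is not to change the extension step but to change the equivalence relation and the invariants it feeds. One now declares $s\approx_j t$ iff ${\rm Cen}(js)={\rm Cen}(jt)$, and replaces (4)--(6) by the centralizer-based conditions (4$'$)--(6$'$); for $\mathbb{Q}$ the two relations coincide, and for arbitrary $\mathbb{K}$ of characteristic~$0$ the new package is easily verified in $AGL(1,\mathbb{K})$ and in $AGL(1,\mathbb{K})*\mathbb{Z}$. A short lemma (${\rm Cen}(js)={\rm Cen}((js)^n)$, hence common powers still force equivalence) is all that is needed to make the proof of the induction step in \cite{RT} carry over verbatim. Once you make this substitution, the rest of your outline is fine; but as written your proposal neither locates the actual obstruction nor supplies the replacement that removes it.
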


To prove Theorem~\ref{t:main} we introduce the following equivalence relation (replacing the equivalence relation given in \cite{RT}): for
any group $G$ and involution $j\in G$ we say that involutions
$s,t\in G$ are \emph{equivalent relative to $j$} (and write $s\approx_j t$) if $\Cen (js)=\Cen(jt)$.

The following proposition replaces Proposition 1.3 of \cite{RT} and provides the induction step for the proof of Theorem~\ref{t:main}

\begin{prop}\label{p:main}

Let $G$ be a group containing involutions $j,t$ and $t'$ with $j^{t'}=t$ and $A={\Cen}_G(j)$. Assume that $G, j, t, t'$ and $A$ satisfy assumptions (1) -- (3) of Theorem 1.1. in \cite{RT} and furthermore:
\begin{itemize}

\item[{(4')}] for any involution $s$ with $s\approx_j t$  there is some $a\in A$ such that $s=t^a$.

\item[{(5')}] for any involution $s\neq j$, we have ${\Cen}(js)=\{1\}\cup \{js'\colon  s'\approx_j s\}$.

\item[{(6')}] for any involution $s\notin AtA, s\neq j,$ there is an involution $s'\in G$ with $ s'\approx_j s $ such that ${\Cen}(js)=\langle js'\rangle$.
\end{itemize}

Then for any involution $v\in G$ with $v\neq j$ there exists  an extension $G_1$ of $G$ such that for $A_1={\Cen}_{G_1}(j)$  there exists some $f\in A_1$ with $t^f=v$ and conditions $(1) - (3)$ and $(4') - (6')$ continue to hold with $G_1$ and $A_1$ in place of $G,A$.
\end{prop}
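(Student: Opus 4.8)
The plan is to build $G_1$ as an HNN extension of $G$ that conjugates $t$ to $v$, following the strategy of \cite{RT} but using the new equivalence relation $\approx_j$. Concretely, since $j^{t'}=t$ and $v\neq j$ is an involution, $jv$ has infinite order (the group is of characteristic $0$, so it contains no non-trivial finite-order elements in the relevant configuration), and one wants an element $f$ conjugating $t$ to $v$ and hence $\langle j,t\rangle$ to $\langle j,v\rangle$ with $jf$ playing a suitable role. The associated subgroups of the HNN extension should be chosen as $\langle jt\rangle$ and $\langle jv\rangle$ (or their centralizers, depending on which case of (4$'$)--(6$'$) applies to $v$), with the stable letter $f$ lying in $A_1=\Cen_{G_1}(j)$ because it commutes with $j$ by construction. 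The first step is therefore to verify that the relevant pair of subgroups is isomorphic via an isomorphism compatible with the requirement $t\mapsto v$, so that the HNN extension $G_1=\langle G,f\mid t^f=v,\ \ldots\rangle$ is well-defined and contains $G$.

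The core of the argument is then to re-establish conditions (1)--(3) and (4$'$)--(6$'$) in $G_1$. For (1)--(3) one invokes the corresponding verification in Theorem 1.1 of \cite{RT}, which is essentially formal once the HNN structure and the malnormality of $A$ are in hand; the key input is that $A=\Cen_G(j)$ is malnormal in $G$ and remains so in $G_1$, together with control of centralizers of involutions via Bass--Serre theory applied to the tree of the HNN extension. For the new conditions, the heart of the matter is a structural description of the involutions of $G_1$: every involution of $G_1$ is conjugate into $G$, and more precisely every involution is conjugate to an involution of $G$ by an element that can be taken to normalize the relevant edge group. This is the standard "involutions in an HNN extension" analysis and it is where the torsion-freeness away from involutions, inherited from characteristic $0$, is used.

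With that description, conditions (4$'$)--(6$'$) are checked as follows. For (5$'$), given an involution $s\neq j$ in $G_1$, an element of $\Cen_{G_1}(js)$ either lies in a conjugate of $G$ — where (5$'$) for $G$ applies — or genuinely moves along the tree, and in the latter case a length/normal-form argument forces it into $\langle js\rangle$, giving exactly $\{1\}\cup\{js'\colon s'\approx_j s\}$ after translating back; here one must check that $s'\approx_j s$ is preserved, i.e. that $\Cen_{G_1}(js')=\Cen_{G_1}(js)$, which follows since both centralizers are determined by the common value $\Cen_G$ inside the vertex group. For (4$'$), an involution $s$ with $s\approx_j t$ has $\Cen_{G_1}(js)=\Cen_{G_1}(jt)$, and since $jt$ has infinite order with $\langle jt\rangle$ contained in $A$ up to the action, the malnormality of $A_1$ pins $s$ down to an $A_1$-conjugate of $t$ (and if $v\in AtA$ one must separately confirm $f$ lands $t$ in the same orbit). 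Condition (6$'$) is the parallel statement for the new involutions lying outside $A_1tA_1$: for such $s$, its centralizer is cyclic generated by some $js'$, and one exhibits $s'$ either from the $G$-level data (if $s$ is conjugate into $G$ outside $AtA$) or from the edge group $\langle jv\rangle$ when $s$ is a "new" involution coming from the HNN construction.

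The main obstacle I expect is bookkeeping the two cases for $v$ simultaneously: whether $v\in AtA$ or $v\notin AtA$ changes which edge group one must use (the cyclic group $\langle jv\rangle$ versus its full centralizer), and one has to make sure that in either case the resulting $G_1$ still satisfies (6$'$) for \emph{all} exterior involutions, old and new, without accidentally creating an involution whose centralizer fails to be cyclic or fails to have the form prescribed in (5$'$). Ensuring the HNN edge groups are malnormal enough that no such pathological centralizer appears — and that $A_1$ inherits malnormality — is the delicate point; everything else reduces to the Bass--Serre normal form computations already implicit in \cite{RT}.
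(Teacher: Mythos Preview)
Your overall strategy---build $G_1$ as an HNN extension conjugating $t$ to $v$ and then verify (1)--(3), (4$'$)--(6$'$) via normal-form arguments---is exactly what \cite{RT} does, and the paper defers to that argument. But the paper's own proof here is far shorter than your sketch: it isolates a single bridging lemma and then says the proof in \cite{RT} carries over verbatim. That lemma is $\Cen(js)=\Cen((js)^n)$ for every $n$ (immediate from (5$'$), since $js$ centralizes $(js)^n$), whence $(js)^n=(js')^m$ forces $s\approx_j s'$. This is precisely what translates the old equivalence relation in \cite{RT} (defined via common powers) into the new one (defined via equality of centralizers), and once you have it every step of \cite{RT} goes through unchanged. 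You never isolate this observation, and it is the entire content of the addendum's argument.

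There is also a genuine technical slip in your sketch. The edge groups of the HNN extension are the full dihedral groups $D_t=\langle j,t\rangle$ and $D_v=\langle j,v\rangle$, not the cyclic subgroups $\langle jt\rangle$ and $\langle jv\rangle$. This matters exactly at the point you flag: with only the cyclic edge groups the relation $j^f=j$ is not imposed, so $f\in A_1$ is \emph{not} ``by construction.'' Relatedly, your assertion that $\langle jt\rangle$ is ``contained in $A$ up to the action'' is false; $jt\notin A$ since $t\notin A$, and $A$ meets any dihedral subgroup only in $\langle j\rangle$. Finally, your worry about a case split $v\in AtA$ versus $v\notin AtA$ with different edge groups is not how the argument runs: one first reduces to the case where $v$ is minimal (an $s'$ as in (6$'$), so $\Cen(jv)=\langle jv\rangle$), and then a single HNN construction with edge groups $D_t$, $D_v$ handles everything.
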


Note that for the group  $AGL(1,\mathbb{Q})$ the new equivalence relation agrees with the one given in \cite{RT}.
It is easy to see exactly as in \cite{RT} that for any field $\mathbb{K}$ of characteristic $0$, the group $AGL(1,\mathbb{K})$ satisfies
properties (1) -- (3) and (4') -- (6').

Using the following lemma, the proof of Proposition 1.3 in \cite{RT} carries over verbatim to this setting.

\begin{lemma}\label{l:equiv}
In the situation of Proposition~\ref{p:main}  we have ${\Cen}(js)={\Cen}((js)^n)$  for any involution $s\in G$.  In particular, for involutions $s, s'\in G$ and $n, m \in\mathbb{Z}$ such that $(js)^n=(js')^m$ we have $s\approx_j s'$.
\end{lemma}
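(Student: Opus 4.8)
The plan is to observe that every power of $js$ is again of the form $j$ times an involution, and then to feed this into assumption (5'). First I would dispose of the trivial cases: if $s=j$ then $js=1$ and there is nothing to prove, so assume $s\neq j$; and since ${\Cen}(x)={\Cen}(x^{-1})$ for every $x\in G$, it suffices to treat the case $n\geq 1$. The inclusion ${\Cen}(js)\subseteq{\Cen}\big((js)^n\big)$ is automatic, so the only point is the reverse inclusion.

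The key step is the identity
\[
(js)^n=j\,w,\qquad\text{where}\qquad w:=(sj)^{n-1}s.
\]
Read as a word in the two involutions $j$ and $s$, the element $w$ is a palindrome (of odd length $2n-1$); hence $w^{-1}$, being the reversal of this word, equals $w$ itself, so $w^2=1$. Since $G$ has characteristic $0$, the product $js$ of the two distinct involutions $j,s$ has infinite order, so $(js)^n\neq 1$; and also $(js)^n\neq j$, since otherwise $(js)^{2n}=1$. Because $w=j\,(js)^n$, this rules out $w\in\{1,j\}$, and therefore $w$ is an involution with $w\neq j$ and $(js)^n=jw$ is indeed ``$j$ times an involution''.

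Now I would apply (5') to the involution $w$, which gives
\[
{\Cen}\big((js)^n\big)={\Cen}(jw)=\{1\}\cup\{\,jw':w'\approx_j w\,\}.
\]
As $js$ commutes with its own power $(js)^n$ and $js\neq 1$, there is an involution $w'$ with $w'\approx_j w$ and $jw'=js$; cancelling $j$ yields $w'=s$, so $s\approx_j w$. By the definition of $\approx_j$ this means ${\Cen}(js)={\Cen}(jw)={\Cen}\big((js)^n\big)$, which is the first assertion. For the ``in particular'' clause, if $(js)^n=(js')^m$ with $n,m\neq 0$ then ${\Cen}(js)={\Cen}\big((js)^n\big)={\Cen}\big((js')^m\big)={\Cen}(js')$, i.e.\ $s\approx_j s'$; the remaining cases, in which the common value is $1$, force $s=j$ or $s'=j$ by characteristic $0$ and are trivial.

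The hard part is essentially just the displayed identity together with the verification that $w$ is an honest involution different from $j$: once $(js)^n$ has been exhibited in the shape $j\cdot(\text{involution})$, assumption (5') applies verbatim and the rest is formal. The one place where anything beyond (5') is used is the appeal to characteristic $0$ to ensure that $js$ has infinite order, which is precisely what excludes the degenerate possibilities $w=1$ and $w=j$, for which (5') would not be applicable.
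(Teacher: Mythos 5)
Your proof is correct and follows essentially the same route as the paper: you write $(js)^n=jw$ with $w=j(js)^n$ an involution, note that $js$ centralizes $(js)^n$, and apply (5') to conclude $s\approx_j w$, hence ${\Cen}(js)={\Cen}((js)^n)$. The extra verifications (that $w$ is an involution distinct from $1$ and $j$, via infinite order of $js$) are exactly the details the paper leaves implicit, so nothing is gained or lost beyond explicitness.
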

\begin{proof}
Since $js$ centralizes $(js)^n$, assumption (5') implies $s\approx_j j(js)^n$. The second part follows directly from this.
\end{proof}

As in \cite{RT} we also see that for any field $\mathbb{K}$ of characteristic $0$, the group $AGL(1,\mathbb{K})*\mathbb{Z}$ satisfies properties (1) -- (3) and (4') -- (6').
Now the proof of Theorem~\ref{t:main} follows exactly as in \cite{RT}.


\end{document}